\documentclass{article}
\usepackage{amsfonts}
\usepackage{amsmath}

\usepackage{amssymb}
\usepackage{graphicx}
\usepackage{latexsym}
\usepackage{color}
\usepackage{mathrsfs}

\newcommand{\di}{\mathrm{d}}

\setcounter{MaxMatrixCols}{10}

\newtheorem{theorem}{Theorem}

\newtheorem{definition}[theorem]{Definition}
\newtheorem{example}[theorem]{Example}

\newtheorem{lemma}[theorem]{Lemma}

\newtheorem{proposition}[theorem]{Proposition}
\newtheorem{remark}[theorem]{Remark}

\newenvironment{proof}[1][Proof]{\noindent\textbf{#1.} }{\ \rule{0.5em}{0.5em}}

\begin{document}

\title{Regular Foliations and Poisson Structures on Orientable Manifolds}
\author{Ruben Flores-Espinoza$^1$ \\
Misael Avenda\~{n}o-Camacho$^2$  \\
\\
$^1$ Departamento de Matem\'aticas,\\
Universidad de Sonora, \\
Blvd L. Encinas y Rosales s/n Col. Centro, CP 83000,\\
Hermosillo (Son), Mexico.\\
\\
$^2$ CONACYT Research Fellow-Departamento de Matem\'aticas,\\
 Universidad de Sonora. \\}


\date{ }

\maketitle

\begin{abstract}
\noindent On an orientable manifold $M$,  we consider a regular even dimensional foliation $\mathcal{F}$ which is globally defined by a set of $k$-independent 1-forms. We give necessary and sufficient conditions for the existence of a regular Poisson structure on $M$ whose Characteristic foliation is precisely $\mathcal{F}$. Moreover, introducing a special class of the foliated 1-cohomology we describe obstructions for the existence of unimodular Poisson structures with a given characteristic foliation. In the same lines, we also give conditions for the existence of transversally constant Poisson structures.
\\

\noindent \textbf{Keywords} Regular foliation $\cdot$ Poisson tensor $\cdot$ Orientable manifolds $\cdot$ Foliated cohomology 
\\

\noindent \textbf{Mathematical Subject Classification (2000)} 	53D17 $\cdot$ 53D05 $\cdot$ 53C12
\end{abstract}

\section{Introduction}

The problem of existence of Poisson structures having a given regular even
dimensional foliation as its characteristic foliation has been considered by
different authors \cite{bert,GMP,HMS}. Here, using the foliated cohomology theory for
regular foliations and an approach to Poisson geometry on orientable
manifolds based on the use of differential forms and the application of the
trace operator calculus, we deal with the problem above and give necessary
and sufficient conditions for the existence of Poisson structures on
orientable manifolds for regular foliations globally defined by a set of k
independent 1-forms. Moreover, introducing a special class of the foliated
1-cohomology, we described obstructions for the existence of unimodular
Poisson structures with a given characteristic foliation. We also include in
the same lines, conditions for the existence of transversally constant
Poisson structures.

In section 2, we introduce, for a regular foliation and following \cite{HMS}, the complex of foliated forms and their foliated cohomology. Moreover, using the ideas in \cite{GMP} we introduce an invariant called
the first obstruction class belonging to the foliated 1-cohomology. When
this invariant vanishes the foliation can be defined by a decomposable
closed form. In the section 3, the basic geometrical objects of Poisson
geometry are presented from a viewpoint adapted to orientable manifolds
according to the approach used in \cite{GMP}. Here, a Poisson tensor is
defined by a (m-2)-differential form satisfying the Jacobi identity
expressed in terms of forms and the trace operator calculus defined on the
orientable manifold M. In the section 4, we introduce the concept of
compatible 2-form with a regular foliation of even dimension and show that
the existence of a Poisson structure having as characteristic foliation the
given regular foliation is equivalent to the existence of a compatible
2-form. Moreover, we show that in such a case, the Poisson structure is
unimodular if and only if the first obstruction class vanishes.

Finally in section 5 for a given foliation, we show additional conditions
to the existence of a compatible 2-form, to have a transversally constant
Poisson structure in the sense of \cite{Vs}. We apply the given criteria
for the class of Dirac brackets associated to a set of independent smooth
functions on a symplectic manifold.

\section{Foliated cohomology and the first obstruction class.}
Let $M$ be an  $m$-dimensional smooth manifold. We consider a set $\{\alpha_1,\ldots, \alpha_k \}$ consisting of $k$ independent 1-forms which are globally defined on $M$ and satisfy the integrability conditions:
\begin{equation}
\di \alpha _{i}\wedge \mu =0;\ i=1,...k,  \label{1.1}
\end{equation}%
where $\mu $ is the k-form
\begin{equation}
\mu =\alpha _{1}\wedge \cdots \wedge \alpha _{k}.  \label{1.2}
\end{equation}
Let
\begin{equation*}
\mathscr{D}:= \{X\in \mathfrak{X}(M)  |
\alpha_i(X)=0 , \ i=1,2,\ldots,k  \} \}
\end{equation*}
the regular distribution induced by the 1-forms $\alpha
_{i},i=1,...,k$. Integrability condition (\ref{1.1}) implies that this distribution is integrable.
\\

Denote by $\mathcal{F}$ the codimension $k$ foliation of $M$ defined by the distribution $\mathscr{D}$ and refer to the set of $k$ independent 1-forms satisfying (\ref{1.1}) as \emph{generators of the foliation} $\mathcal{F}$.

Using a Riemannian metric one can have a transversal distribution to $%
\mathcal{F}$ and local dual vector fields $X^{j},$
\begin{equation}
\alpha _{i}(X^{j})=\delta _{ij},\ i,j=1,...,k  \label{1.2.1.1}
\end{equation}%
From equations (\ref{1.1}) and (\ref{1.2.1.1}), we have
\begin{equation}
\di \alpha _{i}=-\sum_{j=1}^{k}(L_{X^{j}}\alpha _{i})\wedge \alpha _{j}-\frac{1%
}{2}\sum_{j,r=1}^{k}(\mathrm{i}_{[X^{j},X^{r}]}\alpha _{i})\alpha _{j}\wedge \alpha
_{r},   \label{1.2.1}
\end{equation}
for $i=1,...,k$.
\bigskip

We can write the relations (\ref{1.2.1}) in matrix form
\begin{equation}
\left[
\begin{array}{c}
\di \alpha _{1} \\
\di \alpha _{2} \\
\vdots \\
\di \alpha _{k}%
\end{array}%
\right] =G\wedge \left[
\begin{array}{c}
\alpha _{1} \\
\alpha _{2} \\
\vdots \\
\alpha _{k}
\end{array}
\right]  \label{1.2.2}
\end{equation}
where $G=((G_{i}^{j}))$ is the matrix of 1-forms defined by
\begin{equation}
G_{i}^{j}=-((L_{X^{j}}\alpha _{i})+\frac{1}{2}%
\sum_{r=1}^{k}(\mathrm{i}_{[X^{r},X^{j}]}\alpha _{i})\wedge \alpha _{j}).
\label{1.2.3}
\end{equation}
Let be $\delta$ the 1-form given by the trace of matrix $G$
\begin{equation}
\delta =\operatorname{tr} G=-\sum_{i=1}^{k}L_{X^{i}}\alpha _{i}.  \label{1.2.7}
\end{equation}
If $\{\tilde{\alpha}_{1},\tilde{\alpha}_{2},...,\tilde{\alpha}_{k}\}$ is  another set of generators for $\mathcal{F}$  then there exists an invertible matrix of smooth function $F=((F_{i}^{j}))$ such that
\begin{equation}
\left[
\begin{array}{c}
\tilde{\alpha}_{1} \\
\tilde{\alpha}_{2} \\
\vdots \\
\tilde{\alpha}_{2}%
\end{array}%
\right] =F\left[
\begin{array}{c}
\alpha _{1} \\
\alpha _{2} \\
\vdots \\
\alpha _{k}
\end{array}
\right].  \label{1.2.4}
\end{equation}
Denote by $\tilde{G}$ the matrix of 1-forms (\ref{1.2.3}) corresponding to $\{\tilde{\alpha}_{1},\tilde{\alpha}_{2},...,\tilde{\alpha}_{k}\}$. By straightforward computation, we get
\begin{equation}
\tilde{G}=\di F\circ F^{-1}+F\circ G\circ F^{-1},  \label{1.2.5}
\end{equation}
and it follows that
\begin{equation}
\operatorname{tr}  \tilde{G}=\di (\ln \det F)+\operatorname{tr}  G  .\label{1.2.6}
\end{equation}
Therefore, the trace of the matrix $G$ (\ref{1.2.3}) defines a De Rham cohomology class independent of the choice of generators of $\mathcal{F}$. This class is an invariant associated to the foliation $\mathcal{F}$.

The expression in (\ref{1.2.1}) implies that  $\delta$ and $\mu$ are related by
\begin{equation}
\di \mu =-\delta \wedge \mu  \label{1.2.8},
\end{equation}
and
\begin{equation}
\di \delta \wedge \mu =0.  \label{1.2.8.1}
\end{equation}

For a regular foliation $\mathcal{F}$,
the following result generalize  Lemma 2 in \cite{GMP} for one-codimension foliations.

\begin{lemma}\label{lemma1}
A r-form $\gamma \in \Lambda ^{r}(M)$ vanishes on tangent vector fields to
the leaves of $\mathcal{F}$ if and only if $\gamma \wedge \mu =0.$
\end{lemma}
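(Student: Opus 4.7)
The statement is essentially a pointwise linear-algebra claim about exterior forms on $T_pM$, so my plan is to fix an arbitrary point and work in a convenient local coframe that separates the conormal and tangential directions of $\mathcal{F}$.

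First I would complete the $k$ independent 1-forms $\alpha_1,\ldots,\alpha_k$ to a local coframe $\alpha_1,\ldots,\alpha_k,\beta_1,\ldots,\beta_{m-k}$ on a neighborhood $U$ of an arbitrary point $p$ (this is possible precisely because the $\alpha_i$ are independent). Using the transversal vector fields $X^j$ from (\ref{1.2.1.1}) and a local dual frame $Y_1,\ldots,Y_{m-k}$ of $\mathscr{D}$, I get bases on both sides: the $Y_j$ are tangent to the leaves of $\mathcal{F}$ and the $\beta_j$ annihilate the $X^i$. In this coframe any $r$-form admits a decomposition
\begin{equation*}
\gamma=\gamma_0+\gamma_1,\qquad \gamma_1\in\alpha_1\wedge\Lambda^{r-1}(U)+\cdots+\alpha_k\wedge\Lambda^{r-1}(U),
\end{equation*}
where $\gamma_0=\sum_{|I|=r} c_I\,\beta_{i_1}\wedge\cdots\wedge\beta_{i_r}$ is the purely transversal part.

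For the direction $(\Rightarrow)$, if $\gamma$ vanishes on tangent vector fields to the leaves, then evaluating $\gamma$ on any $r$-tuple drawn from $Y_1,\ldots,Y_{m-k}$ returns zero; only $\gamma_0$ contributes to such evaluations (all terms in $\gamma_1$ carry an $\alpha_i$ which annihilates the $Y_j$). Hence every coefficient $c_I$ vanishes, so $\gamma=\gamma_1$ lies in the ideal generated by $\alpha_1,\ldots,\alpha_k$. Since $\alpha_i\wedge\mu=0$ for each $i$, I conclude $\gamma\wedge\mu=0$.

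For the direction $(\Leftarrow)$, I note that $\gamma_1\wedge\mu=0$ automatically (each summand contains $\alpha_i\wedge\mu=0$), so $\gamma\wedge\mu=\gamma_0\wedge\mu$. The products $\beta_{i_1}\wedge\cdots\wedge\beta_{i_r}\wedge\alpha_1\wedge\cdots\wedge\alpha_k$ ranging over strictly increasing multi-indices $I$ with $|I|=r$ are part of the induced basis of $\Lambda^{r+k}(T^\ast U)$, hence linearly independent at each point. Therefore $\gamma_0\wedge\mu=0$ forces all $c_I=0$, so $\gamma=\gamma_1$ lies in the ideal $(\alpha_1,\ldots,\alpha_k)$, which immediately gives that $\gamma$ vanishes on any $r$-tuple of leaf-tangent vectors at $p$. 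As $p$ was arbitrary, this concludes the proof.

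I do not expect any real obstacle; the only point that requires a little care is verifying the linear independence used in $(\Leftarrow)$, which is a standard fact about extending $\alpha_1,\ldots,\alpha_k$ to a coframe. The whole argument is local and algebraic, with no use of the integrability conditions (\ref{1.1}) beyond the definition of $\mathcal{F}$.
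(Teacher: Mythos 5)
Your proof is correct and follows essentially the same route as the paper: completing $\alpha_1,\ldots,\alpha_k$ to a local coframe, splitting $\gamma$ into the part lying in the ideal generated by the $\alpha_i$ and the purely $\beta$-part, and comparing coefficients (the paper's converse, which evaluates $\gamma\wedge\mu$ on the dual fields $X^i$, amounts to your linear-independence observation, and the paper's remark on the case $r>m-k$ is handled implicitly in your setup since then $\gamma_0$ is an empty sum). One cosmetic point: $\gamma_0$ is the purely \emph{leafwise} part, not the transversal one, since the $\beta_j$ are dual to the leaf-tangent frame $Y_j$.
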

\begin{proof}
Consider the $k$-independent 1-forms $\alpha _{i},\ i=1,...,k$. Locally, there exists an independent set of 1-forms $\beta_{1},...,\beta _{m-k}$ such that $\{\alpha_1,\ldots,\alpha_k, \beta _{1},...,\beta _{m-k}\} $ is a local
basis for 1-forms. Consider a dual basis $X^{1},....,X^{k},Y^{1},...,Y^{m-k}$
with $\alpha _{i}(X^{j})=\delta _{ij},$ $\alpha _{i}(Y^{r})=0$, $\beta
_{s}(X^{j})=0$ and $\beta _{s}(Y^{r})=\delta _{sr}$ for $i,j=1,...,k$ and $%
r,s=1,...,m-k.$ Note that the vector fields $Y^{1},...,Y^{m-k}$ are tangent to
the leaves of $\mathcal{F}$. Then, $\gamma $ has in the given basis the
following representation
\begin{eqnarray*}
\gamma &=&\sum_{1\leq i_{1}\leq k}^{k}\theta _{1}^{i_{1}}\wedge \alpha
_{i_{1}}+\sum_{1\leq i_{1}<i_{2}\leq k}\theta _{2}^{i_{1}i_{2}}\wedge \alpha
_{i_{1}}\wedge \alpha _{i_{2}}+\cdots + \\
&&+\sum_{1\leq i_{1}<i_{2}<\cdots i_{r}\leq k}\theta _{r}^{i_{1}i_{2}\cdots
i_{r}}\wedge \alpha _{i_{1}}\wedge \alpha _{i_{2}}\wedge \cdots \wedge
\alpha _{i_{r}}+ \\
&&+\sum_{1\leq i_{1}<i_{2}<\cdots i_{r}\leq m-k}h_{r}^{i_{1}i_{2}\cdots
i_{r}}\beta _{i_{1}}\wedge \beta _{i_{2}}\wedge \cdots \wedge \beta _{i_{r}}
\end{eqnarray*}%
where $h_{r}^{i_{1}i_{2}\cdots i_{r}}\in C^{\infty }(M)$ and $\theta
_{j}^{i_{1}i_{2}\cdots i_{j}}\in \Lambda ^{r-j}(M)$ and $\mathrm{i}_{X^{p}}\theta
_{j}^{i_{1}i_{2}\cdots i_{j}}=0$ for all for $j=1,...r,$ and $p=1,...,k.$ If
$\gamma $ vanishes when valued on $r$ tangent vector fields, we have that all
functions $h_{r}^{i_{1}i_{2}\cdots i_{r}}$ vanishes and $\gamma \wedge \mu
=0.$ Note that, if $ m-k<r$, in the above expansion we have not
terms of the form $\beta _{i_{1}}\wedge \beta _{i_{2}}\wedge \cdots \wedge
\beta _{i_{r}}$ and also $\gamma \wedge \mu =0.$ Reciprocally, si $\gamma
\wedge \mu =0,$ evaluating $\gamma \wedge \mu $ on the dual vector fields $%
X^{i}$ of the 1-forms $\alpha_{i},$ one obtain for $\gamma $ the above
expansion without terms of the form $\beta _{i_{1}}\wedge \beta
_{i_{2}}\wedge \cdots \wedge \beta _{i_{r}}$ and consequently $\gamma $
vanishes when valued on tangent vector fields to the leaves of $\mathcal{F}$.
\end{proof}
\\

Lemma \ref{lemma1} allow us to define an equivalence relation on the exterior algebra $\Lambda (M)=\oplus _{i=0}^{m}\Lambda ^{i}(M)$ of
differential forms of $M$ as follows.
\begin{definition}\label{relequiv}
The r-forms $\beta ,\rho \in \Lambda ^{r}(M)$ are called  $\mathcal{F}$-equivalent if
\begin{equation}
(\beta -\rho )\wedge \mu =0.  \label{1.3}
\end{equation}
\end{definition}
From Lemma \ref{lemma1}, two forms $\beta $ and $\rho $ are $\mathcal{F}$-equivalent if
both forms  coincide on tangent vector fields to the foliation $\mathcal{F}$.

The relation of equivalence defined on $\Lambda (M)$ by (\ref{1.3}) does not depend on the choice of generators of the foliation $\mathcal{F}$. In fact, if $\tilde{\alpha}_{1},\tilde{\alpha}_{2,}...\tilde{\alpha}_{k}$ is
another set of generators of $\mathcal{F}$, then relation  (\ref{1.2.4}) implies that
\begin{equation}
\tilde{\alpha}_{i}=\sum_{j=1}^{k}F_{i}^{j}\alpha _{j}, \ i,j=1,...,k.
\label{1.4}
\end{equation}
It follows form here that $\tilde{\mu}=\tilde{\alpha}_{1}\wedge \tilde{\alpha}_{2}\wedge \cdots
\wedge \tilde{\alpha}_{k}=(\det F)\mu $. Therefore, $(\beta -\rho )\wedge \mu =0$
if and only if $(\beta -\rho )\wedge \tilde{\mu}=0.$

For every $r=1,2,...m-k$, we denote by $\Lambda _{\mathcal{F}}^{r}(M)$ the space of
classes of equivalence and $\pi $ the projection operator sending each
r-form $\beta $ into its equivalence class
\begin{eqnarray}
\pi &:&\Lambda ^{r}(M)\rightarrow \Lambda _{\mathcal{F}}^{r}(M)  \label{1.5}
\\
\pi (\beta ) &=&\left\{ \rho \in \Lambda ^{r}(M)\text{ with }(\beta -\rho
)\wedge \mu =0\right\}  \notag
\end{eqnarray}%
Note that $\Lambda _{\mathcal{F}}^{r}(M)=0$ for all $r>m-k.$ From now on, the
classes $\pi (\beta )\in \Lambda _{\mathcal{F}}^{r}(M)$ will be called
\textit{foliated r-forms on }$M.$ The foliated r-forms take the general form%
\begin{eqnarray}
\pi (\beta ) &=&\beta +\sum_{i=1}^{k}\theta _{1}^{i}\wedge \alpha
_{i}+\sum_{1\leq i_{1}<i_{2}\leq k}\theta _{2}^{i_{1}i_{2}}\wedge \alpha
_{i_{1}}\wedge \alpha _{i_{2}}+\cdots +  \label{1.5.1} \\
&&+\sum_{1\leq i_{1}<i_{2}<\cdots i_{r}\leq k}\theta
_{r-1}^{i_{1}i_{2}\cdots i_{r}}\wedge \alpha _{i_{1}}\wedge \alpha
_{i_{2}}\wedge \cdots \wedge \alpha _{i_{r}}  \notag
\end{eqnarray}%
for some $\theta _{j}^{i_{1}i_{2}\cdots i_{j}}\in \Lambda ^{r-j}(M)$ for $j=1,...r.$
\\

It is easy to prove that if $\alpha \in \pi(\beta)$ and $\gamma \in \pi(\chi)$ then $\alpha + \gamma \in \pi(\beta + \chi)$ and $\alpha \wedge \gamma \in \pi(\beta \wedge \chi).$ Taking into account this property, we can extend the notion of  sum and wedge product to the space of foliated forms $ \Lambda _{\mathcal{F}}^{r}(M)$. Given $\beta,\eta\in \Lambda ^{r}(M)$  and $\chi \in \Lambda^{s}(M)$ we define the sum of $\pi(\beta) $ and $ \pi( \eta)$ by
\begin{equation}
\pi(\beta) + \pi(\eta) := \pi(\beta + \eta).\label{sumafol}
\end{equation}
The wedge product of $\pi(\beta) $ and $ \pi( \chi)$ is defined by
\begin{equation}
\pi (\beta )\wedge \pi (\chi ):=\pi (\beta \wedge \chi ).  \label{1.7}
\end{equation}

By straightforward computation, we can prove that the sum and wedge product operation on $\Lambda _{\mathcal{F}}(M)$ have the same properties of the usual operations on  $\Lambda (M)$.

\begin{lemma}\label{lemderiv}
Let $\beta, \eta \in \Lambda^r(M)$. If $\beta$ and $\eta$ are $\mathcal{F}$-related then so they are $\di \beta$ and $\di \eta $. In particular, if $\beta \wedge \mu =0$ then $\di\beta \wedge \mu=0 .$
\end{lemma}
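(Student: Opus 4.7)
The plan is to reduce the general statement to the ``in particular'' case via linearity, then prove that one case by differentiating the defining equation $\beta\wedge\mu=0$ and invoking the relation $\di\mu = -\delta\wedge\mu$ from (\ref{1.2.8}).

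First I would observe that since both $\di$ and $\wedge\mu$ are linear, $(\beta - \eta)\wedge\mu = 0$ if and only if $\beta$ and $\eta$ are $\mathcal{F}$-equivalent, and $(\di\beta - \di\eta)\wedge\mu = \di(\beta-\eta)\wedge\mu$. So it suffices to prove the particular claim: whenever $\beta\in\Lambda^r(M)$ satisfies $\beta\wedge\mu=0$, one also has $\di\beta\wedge\mu=0$. The general case for $\mathcal{F}$-related $\beta,\eta$ then follows by applying the particular claim to $\beta-\eta$.

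For the particular claim, I would take the exterior derivative of $\beta\wedge\mu=0$ using the graded Leibniz rule:
\begin{equation*}
0 = \di(\beta\wedge\mu) = \di\beta\wedge\mu + (-1)^r \beta\wedge \di\mu.
\end{equation*}
Now substitute $\di\mu = -\delta\wedge\mu$ from (\ref{1.2.8}) to obtain
\begin{equation*}
\di\beta\wedge\mu = (-1)^r \beta\wedge\delta\wedge\mu.
\end{equation*}
Because $\delta$ is a 1-form and $\beta$ has degree $r$, graded-commutativity yields $\beta\wedge\delta\wedge\mu = (-1)^r\,\delta\wedge\beta\wedge\mu$, and since $\beta\wedge\mu = 0$ by hypothesis, the right-hand side vanishes. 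Hence $\di\beta\wedge\mu = 0$, as required.

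There is no real obstacle here beyond bookkeeping of the degree signs; the entire argument is a one-line manipulation enabled by the key identity (\ref{1.2.8}), which packages the integrability condition (\ref{1.1}) into a clean Leibniz-style relation between $\di\mu$ and $\mu$. The linearity reduction at the start is what lets us state the result as a well-definedness property, namely that exterior differentiation descends to the quotient $\Lambda^\bullet_{\mathcal{F}}(M)$, paving the way for the foliated de Rham complex in the sequel.
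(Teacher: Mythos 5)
Your proof is correct and follows essentially the same route as the paper: differentiate the relation $(\beta-\eta)\wedge\mu=0$ (equivalently $\beta\wedge\mu=0$ after your linearity reduction), substitute $\di\mu=-\delta\wedge\mu$ from (\ref{1.2.8}), and observe that the resulting term is killed by the hypothesis. The only difference is cosmetic --- you prove the ``in particular'' case first and deduce the general statement by applying it to $\beta-\eta$, while the paper argues in the reverse order --- and your sign bookkeeping is in fact slightly more careful than the paper's, which writes $(-1)^k$ where the Leibniz rule gives $(-1)^r$ (harmless there, since the term vanishes regardless).
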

\begin{proof}
$\beta$ and $\eta$ are $\mathcal{F}$-related if and only if
\begin{equation*}
(\beta - \eta)\wedge \mu =0.
\end{equation*}
Thus, we get
\begin{equation*}
(\di \beta - \di \eta)\wedge\mu + (-1)^k (\beta - \eta)\wedge \di \mu =0.
\end{equation*}
From (\ref{1.2.8}), we have $(\beta - \eta)\wedge \di \mu = -(-1)^{k}(\beta - \eta)\wedge \mu \wedge \delta =0$. Therefore
\begin{equation*}
(\di \beta - \di \eta)\wedge\mu =0,
\end{equation*}
and $\di \beta$ and $\di \eta $ are $\mathcal{F}$-related. Taking into account that $\beta \wedge \mu =0$ means $\beta$ is $\mathcal{F}$-related with $\eta \equiv 0$, we have the rest of the lemma.
\end{proof}
\begin{remark}
Notice that if $\theta \in \Lambda ^{s}(M)$ and $\mu \wedge \theta =0$ then  $\mu \wedge \mathrm{i}_{X}\theta
=0 $ for all $X$ such that $\mathrm{i}_{X}\mu =0.$
\end{remark}

Taking into account Lemma \ref{lemderiv}, we introduce the foliated exterior derivative operator $\di_{ \mathcal{F}}$ as follows:
\begin{eqnarray}
\di_{\mathcal{F}} &:&\Lambda _{\mathcal{F}}^{r}(M)\rightarrow \Lambda _{\mathcal{F}}^{r+1}(M)  \label{1.8} \\
\di_{\mathcal{F}}(\pi (\beta )) &:=&\pi (\di\beta ).  \notag
\end{eqnarray}%

\begin{proposition}\label{profolder}
The foliated exterior derivative $\di_{ \mathcal{F}}$ has the following properties:
\begin{itemize}
\item[(i)] $\di_{ \mathcal{F}}$ is a linear operator. That is, for $\beta, \eta \in \Lambda^r(M)$,
\begin{equation*}
\di_\mathcal{F} (\pi(\beta+\eta))= \di_\mathcal{F} (\pi(\beta))+\di_\mathcal{F} (\pi(\eta)).
\end{equation*}
\item[(ii)] $\di_{ \mathcal{F}}$ is a derivation with respect to the foliated wedge product. For $\beta \in \Lambda^r(M),\chi \in \Lambda^r(M)$, we have
    \begin{equation*}
    \di_\mathcal{F}( \pi(\beta\wedge\chi))= \di_\mathcal{F} (\pi(\beta))\wedge \pi(\chi)+(-1)^r\pi(\beta)\wedge \di_\mathcal{F} (\pi(\chi)).
    \end{equation*}
\item[(iii)] $\di_{ \mathcal{F}}$ is a cohomology operator,
\begin{equation}
\di_{\mathcal{F}}\circ \di_{\mathcal{F}}=0.  \label{1.9}
\end{equation}%
\end{itemize}
 \end{proposition}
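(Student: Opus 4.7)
The plan is to reduce each of the three properties of $\di_{\mathcal{F}}$ to the corresponding property of the usual exterior derivative $\di$ on $\Lambda(M)$, using the definition $\di_{\mathcal{F}}(\pi(\beta)) := \pi(\di\beta)$, the definitions of sum and wedge product on foliated forms in (\ref{sumafol}) and (\ref{1.7}), and the fact that $\pi$ is a well-defined map of equivalence classes. The genuine content has already been absorbed into Lemma \ref{lemderiv}, which ensures that $\di_{\mathcal{F}}$ descends to the quotient; everything else is formal.

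For part (i), I would start from $\di(\beta+\eta) = \di\beta + \di\eta$ in $\Lambda^{r+1}(M)$, apply $\pi$ to both sides, and use (\ref{sumafol}) to rewrite $\pi(\di\beta + \di\eta) = \pi(\di\beta)+\pi(\di\eta)$. Unfolding the definition of $\di_{\mathcal{F}}$ gives the stated equality $\di_{\mathcal{F}}(\pi(\beta+\eta)) = \di_{\mathcal{F}}(\pi(\beta)) + \di_{\mathcal{F}}(\pi(\eta))$.

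For part (ii), I would use the Leibniz rule for $\di$ on $\Lambda(M)$,
\begin{equation*}
\di(\beta\wedge\chi) = \di\beta\wedge\chi + (-1)^{r}\beta\wedge\di\chi,
\end{equation*}
apply $\pi$, and translate both the right-hand side sum via (\ref{sumafol}) and each wedge via (\ref{1.7}). This yields
\begin{equation*}
\di_{\mathcal{F}}(\pi(\beta\wedge\chi)) = \pi(\di\beta)\wedge\pi(\chi) + (-1)^{r}\pi(\beta)\wedge\pi(\di\chi),
\end{equation*}
which is the desired Leibniz identity once $\pi(\di\beta)$ and $\pi(\di\chi)$ are rewritten as $\di_{\mathcal{F}}(\pi(\beta))$ and $\di_{\mathcal{F}}(\pi(\chi))$. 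For part (iii), $\di\circ\di = 0$ on $\Lambda(M)$ gives $\di(\di\beta) = 0$, hence $\pi(\di(\di\beta)) = \pi(0) = 0$, and therefore $\di_{\mathcal{F}}(\di_{\mathcal{F}}(\pi(\beta))) = 0$.

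The only point that is not bookkeeping is verifying that $\di_{\mathcal{F}}$ is well-defined on equivalence classes, since all three statements implicitly use representatives $\beta,\eta,\chi$; for the arguments above to make sense one must know that $\pi(\beta)=\pi(\beta')$ implies $\pi(\di\beta)=\pi(\di\beta')$, and similarly that the wedge product on classes is independent of representatives. The first is exactly Lemma \ref{lemderiv}, and the second was already noted immediately before (\ref{sumafol}) and (\ref{1.7}). So the only potential obstacle, namely well-definedness on $\Lambda_{\mathcal{F}}^{r}(M)$, has been handled by Lemma \ref{lemderiv}, and the proof reduces to three short lines of diagram chasing.
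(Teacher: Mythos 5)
Your proof is correct and matches what the paper intends: the paper dismisses this proposition with ``straightforward computations,'' and your argument---pushing linearity, the Leibniz rule, and $\di\circ\di=0$ through the projection $\pi$, with well-definedness supplied by Lemma \ref{lemderiv} and the representative-independence of sum and wedge noted before (\ref{sumafol}) and (\ref{1.7})---is precisely that straightforward computation, spelled out. You also correctly identify that the only nontrivial content is the descent to equivalence classes, which the paper had already isolated in Lemma \ref{lemderiv}.
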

The proof of the proposition above consists of straightforward computations.


For $r=0,...,m-k$, we define the foliated cohomology spaces
\begin{equation}
\mathcal{H}_{\mathcal{F}}^{r}(M)=\frac{\ker \di_{\mathcal{F}}:\Lambda
_{\mathcal{F}}^{r}(M)\rightarrow \Lambda _{\mathcal{F}}^{r+1}(M)}{\operatorname{Im}%
\di_{\mathcal{F}}:\Lambda _{\mathcal{F}}^{r-1}(M)\rightarrow \Lambda _{\mathcal{F}}^{r}(M)}.  \label{1.10}
\end{equation}%
A foliated r-form $\pi (\beta )$ is called $\di_{\mathcal{F}}-closed$ if $\di_{\mathcal{F}}(\pi (\beta ))=0.$ $\pi (\beta )$ is called $\di_{\mathcal{F}}-exact$ if there exists $\phi \in \Lambda ^{r-1}(M)$ such that $\pi (\beta )=\di_{\mathcal{F}}(\pi
(\phi )).$ For each $\di_{\mathcal{F}}-closed$ foliated form $\pi (\beta )$ we
denote by $[\pi (\beta )]_{\mathcal{F}}$ its foliated cohomology class. If a
closed foliated r-form $\pi (\beta )$ is $\di_{\mathcal{F}}-exact,$ then its
cohomology class $[\pi (\beta )]_{\mathcal{F}}$ vanishes.

The definition of foliated form and foliated cohomology for regular
foliations given here, coincide with the one given in \cite{HMS}.

Let $\mathcal{F}$ a regular foliation on $M$ and $\{\alpha_1, \ldots, \alpha_k \}$ a generator set of $\mathcal{F}$.
Consider the 1-form $\delta $ given by (\ref{1.2.7}).
\begin{lemma}\label{lemobs}
The 1-form $\delta $ (\ref{1.2.7}) has the following properties:
\begin{itemize}
\item[(i)] If $\{\tilde{\alpha}_{1},\tilde{\alpha}_{2,}...\tilde{\alpha}_{k}\}$ is another generator set of
 $\mathcal{F}$, $\tilde{X}^j$ the dual vector fields  and $\tilde{\delta}=-\sum_{i=1}^{k}L_{\tilde{X}^{i}}\tilde{\alpha}
_{i}$  then
\begin{equation*}
\di\tilde{\delta}=\di\delta. \label{1.17}
\end{equation*}

\item[(ii)]The foliated 1-form $\pi(\delta )$ does not depend on the dual vector fields used in the definition
of $\delta $ (\ref{1.2.7}).
\item[(iii)] $\pi (\delta )$ is $\di_{\mathcal{F}}$-closed .

\item[(iv)] The foliated cohomology class $[\pi (\delta )]_{\mathcal{F}}$ of $\mathcal{H}_{\mathcal{F}}^{1}(M)$ does not depend of the generators of $\mathcal{F}$.
\end{itemize}
\end{lemma}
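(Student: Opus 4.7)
The plan is to handle the four claims largely by recycling computations already in hand: (i) and (iv) follow from the transition formula \eqref{1.2.6}, (iii) is immediate from \eqref{1.2.8.1}, and (ii) is the only item requiring a fresh calculation, which I would base on Cartan's magic formula together with the remark following Lemma \ref{lemderiv}. I would proceed in the order (i), (iii), (ii), (iv) so that each later item can quote the earlier ones.

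For (i), the formula \eqref{1.2.6} already reads $\operatorname{tr}\tilde G = \di(\ln\det F) + \operatorname{tr} G$, i.e.\ $\tilde\delta = \delta + \di(\ln\det F)$; applying $\di$ and using $\di^2=0$ gives $\di\tilde\delta = \di\delta$. For (iii), by definition $\di_{\mathcal F}\pi(\delta) = \pi(\di\delta)$, and this class is zero in $\Lambda_{\mathcal F}^{2}(M)$ precisely when $\di\delta\wedge\mu=0$, which is the identity \eqref{1.2.8.1} already established.

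The real work is in (ii). Let $\{Y^{1},\dots,Y^{k}\}$ be a second set of dual vector fields with $\alpha_{i}(Y^{j})=\delta_{ij}$, and set $Z^{j}:=Y^{j}-X^{j}$. Then $\alpha_{i}(Z^{j})=0$ for all $i,j$, so each $Z^{j}$ is tangent to $\mathcal F$ and in particular $\mathrm{i}_{Z^{j}}\mu=0$. Denote by $\delta'=-\sum_{i}L_{Y^{i}}\alpha_{i}$ the 1-form produced by the new dual fields. Since $\alpha_{i}(Z^{i})=0$, Cartan's magic formula gives
\begin{equation*}
L_{Z^{i}}\alpha_{i} \;=\; \mathrm{i}_{Z^{i}}\di\alpha_{i} + \di\bigl(\alpha_{i}(Z^{i})\bigr) \;=\; \mathrm{i}_{Z^{i}}\di\alpha_{i}.
\end{equation*}
Integrability \eqref{1.1} says $\di\alpha_{i}\wedge\mu=0$, so by the remark following Lemma \ref{lemderiv}, $\mathrm{i}_{Z^{i}}\di\alpha_{i}\wedge\mu=0$ as well. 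Summing over $i$ gives $(\delta'-\delta)\wedge\mu=0$, hence $\pi(\delta')=\pi(\delta)$ in $\Lambda_{\mathcal F}^{1}(M)$.

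Finally, for (iv), let $\{\tilde\alpha_{1},\dots,\tilde\alpha_{k}\}$ be another generator set, with transition matrix $F$ as in \eqref{1.2.4}, and choose any dual vector fields $\tilde X^{j}$; by part (ii) the foliated class of $\tilde\delta$ does not depend on this choice. The transition identity \eqref{1.2.6} gives $\tilde\delta - \delta = \di(\ln\det F)$, hence
\begin{equation*}
\pi(\tilde\delta) - \pi(\delta) \;=\; \pi\bigl(\di(\ln\det F)\bigr) \;=\; \di_{\mathcal F}\bigl(\pi(\ln\det F)\bigr),
\end{equation*}
which is $\di_{\mathcal F}$-exact. Therefore $[\pi(\tilde\delta)]_{\mathcal F}=[\pi(\delta)]_{\mathcal F}$ in $\mathcal H_{\mathcal F}^{1}(M)$. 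The main obstacle is really only (ii), where one must recognize that the Lie derivative along the tangential correction $Z^{i}$ reduces, via $\alpha_{i}(Z^{i})=0$, to an interior product against $\di\alpha_{i}$ so that the integrability condition together with the remark can be applied.
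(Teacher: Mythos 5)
Your proposal is correct and follows essentially the same route as the paper: (i) and (iv) from the transition identity \eqref{1.2.6}, (iii) from \eqref{1.2.8.1}, and (ii) by writing the new dual fields as the old ones plus tangential corrections. The only difference is that in (ii) you make explicit, via Cartan's formula $L_{Z^{i}}\alpha_{i}=\mathrm{i}_{Z^{i}}\di\alpha_{i}$ and the remark after Lemma \ref{lemderiv}, the step $(\sum_{i}L_{T^{i}}\alpha_{i})\wedge\mu=0$ that the paper asserts without justification --- a welcome filling-in rather than a different argument.
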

\begin{proof}
\begin{itemize}
\item[(i)] From (\ref{1.2.6}) $\delta - \tilde{\delta}$ is an exact 1-form. Thus, $\di\delta -\di \tilde{\delta}=0.$
\item[(ii)] If $\tilde{X}^{j},$ $j=1,...,k$ is another set of global dual vector fields to $\alpha _{i},$ $i=1,...,k,$ then $\tilde{X}^{j}=X^{j}+T^{j}$ where $T^{j}$ are vector fields tangent to $\mathcal{F}$. In this case, we have
\begin{equation*}
\tilde{\delta}= -\sum_{i=1}^{k}L_{\tilde{X}^{i}}\alpha
_{i} =\delta-\sum_{i=1}^{k}L_{T^{i}}\alpha _{i}.  \label{1.14.1}
\end{equation*}
Since $(\sum_{i=1}^{k}L_{T^{i}}\alpha _{i})\wedge \mu =0$, we obtain $\pi(\tilde{\delta})=\pi(\delta )$.
\item[(iii)] From (\ref{1.2.8}) $\di\delta \wedge \mu =0$. Hence,
\begin{equation*}
\di_{\mathcal{F}} (\pi(\delta))=\pi(\di\delta) = \pi(0).
\end{equation*}

\item[(iv)] Let $\tilde{\delta}$ as in item (i). By (\ref{1.2.6}), we have $\tilde{\delta}= \delta + \di (\ln \det F)$. Thus,
    \begin{equation*}
    \pi(\tilde{\delta})= \pi(\delta)+ \pi(\di (\ln \det F))=\pi(\delta)+\di_{\mathcal{F}}( \pi( \ln \det F)).
    \end{equation*}
Therefore, $[\pi (\delta )]_{\mathcal{F}}=[\pi (\tilde{\delta} )]_{\mathcal{F}}$.
\end{itemize}
\end{proof}

Following the work of Guillemin, Miranda and Pires in \cite{GMP}, we call to the foliated cohomology class
\begin{equation}
C_{\mathcal{F}}=[\pi (\delta )]_{\mathcal{F}}  \label{1.18}
\end{equation}%
\textit{the first obstruction class} of $\mathcal{H}_{\mathcal{F}}^{1}(M)$. By Lemma (\ref{lemobs}), this class is an invariant of the regular foliation $\mathcal{F}$.

The following theorem is the generalization of Theorem 4 given in \cite{GMP} for codimension one regular foliations.

\begin{theorem}
For a regular foliation $\mathcal{F}$ generated by k independent 1-forms, the first obstruction class $C_{\mathcal{F}}$ vanishes identically if and only if, we can chose the defining 1-forms $\alpha _{i}$ $i=1,...,k$ such that $\mu =\alpha_{1}\wedge \alpha_{2}\wedge \cdots \wedge \alpha _{k}$ to be closed.
\end{theorem}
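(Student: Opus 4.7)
The plan is to prove both implications using the structural identity $\di\mu = -\delta\wedge\mu$ from (\ref{1.2.8}) together with the transformation formula (\ref{1.2.6}) governing how $\delta$ and $\mu$ change under a change of generators.

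For the easy direction ($\Leftarrow$), I assume the generators $\{\alpha_1,\dots,\alpha_k\}$ can be chosen so that $\mu$ is closed. Then (\ref{1.2.8}) forces $\delta\wedge\mu = 0$, which by Lemma \ref{lemma1} (or directly from the definition of the equivalence relation in Definition \ref{relequiv}) means $\pi(\delta) = 0$ in $\Lambda^1_{\mathcal{F}}(M)$. In particular the cohomology class $C_{\mathcal{F}} = [\pi(\delta)]_{\mathcal{F}}$ vanishes. This step is immediate once one unwinds the definitions.

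For the nontrivial direction ($\Rightarrow$), suppose $C_{\mathcal{F}} = 0$. Since foliated $0$-forms are just smooth functions and $\di_{\mathcal{F}}(\pi(f)) = \pi(\di f)$, exactness of $\pi(\delta)$ yields $f\in C^{\infty}(M)$ with $\pi(\delta) = \pi(\di f)$, i.e.
\begin{equation*}
(\delta - \di f)\wedge \mu = 0.
\end{equation*}
Now I produce new generators by a diagonal change: define $F := \mathrm{diag}(e^{-f},1,\dots,1)$, so $\det F = e^{-f}$, and set $\tilde{\alpha}_i := \sum_j F_i^{j}\alpha_j$. Then $\tilde{\mu} = (\det F)\,\mu = e^{-f}\mu$, and the trace relation (\ref{1.2.6}) gives
\begin{equation*}
\tilde{\delta} = \delta + \di(\ln\det F) = \delta - \di f.
\end{equation*}

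The final step is to verify closedness of $\tilde\mu$. Applying (\ref{1.2.8}) to the new generators,
\begin{equation*}
\di \tilde{\mu} = -\tilde{\delta}\wedge\tilde{\mu} = -(\delta - \di f)\wedge e^{-f}\mu = -e^{-f}\bigl((\delta - \di f)\wedge\mu\bigr) = 0,
\end{equation*}
where the last equality uses the consequence of $\pi(\delta)=\pi(\di f)$ derived above. Thus the chosen generators satisfy $\di\tilde{\mu}=0$, completing the proof. The main conceptual point, and the only delicate step, is the passage from the cohomological hypothesis $[\pi(\delta)]_{\mathcal{F}} = 0$ to a concrete change of generators; once one realizes that rescaling a single $\alpha_i$ by $e^{-f}$ is enough to absorb the exact correction $\di f$ into $\delta$ via (\ref{1.2.6}), the rest is a direct application of the structural formula (\ref{1.2.8}).
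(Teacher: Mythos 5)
Your proof is correct and takes essentially the same route as the paper: the reverse direction is the same one-line application of (\ref{1.2.8}), and for the forward direction the paper performs exactly your diagonal rescaling $\tilde{\alpha}_1 = e^{-h}\alpha_1$ (other generators unchanged) and concludes from $\di\tilde{\mu} = -\tilde{\delta}\wedge\tilde{\mu}$. The only cosmetic difference is that you obtain $\tilde{\delta} = \delta - \di f$ from the trace formula (\ref{1.2.6}), whereas the paper computes $\tilde{\delta}$ directly from the rescaled dual vector fields and picks up an additional term $-(L_{X_1}h)\alpha_1$; since this discrepancy is a multiple of $\alpha_1$ it wedges to zero against $\tilde{\mu}$, so both computations yield $\tilde{\delta}\wedge\tilde{\mu} = 0$ and the arguments coincide.
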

\begin{proof}
Suppose that $C_{\mathcal{F}}$ vanishes. Then, there exists a smooth function $h\in C^{\infty }(M)$ such that  $\delta \wedge \mu =\di h\wedge \mu$. By (\ref{1.2.8}), $\di \mu =- \di h\wedge \mu .$ Now, take the following generators $\tilde{\alpha}_{1}=e^{-h}\alpha _{1},\tilde{\alpha}_{2}=\alpha _{2}...\tilde{\alpha}_{k}=\alpha _{k},$ and its
corresponding dual vector fields $\tilde{X}_{1}=e^{h}X_{1},$ $\tilde{X}%
_{2}=X_{2},...,\tilde{X}_{k}=X_{k}.$ Thus, $\tilde{\delta}=-L_{\tilde{X}_{1}}%
\tilde{\alpha}_{1}-\sum_{j=2}^{k}L_{X^{j}}\alpha _{i}=-(L_{X_{1}}h)\alpha
_{1}-\di h+\delta $. Finally, $\di\tilde{\mu}=- \tilde{\delta} \wedge \tilde{\mu}=e^{-h}((L_{X_{1}}h)\alpha
_{1}+\di h-\delta )\wedge \mu = 0.$ Reciprocally, if $\di \mu =0,$ then $\delta \wedge \mu =0$ and $[\pi (\delta )]=0$.
\end{proof}

\section{Poisson structures on orientable manifolds.}

In this section, we recall some basic facts on Poisson structures and
extending the ideas given in \cite{GMP}, we present an adapted approach
for Poisson structures on orientable manifolds based on the trace operator
calculus on the graduated algebra of contravariant antisymmetric tensor
fields on $M$.

Let be $M$ an orientable $m-$manifold and $\Omega $ a volume form on $M.$
For $k=0,1,...m,$ denote by $\mathcal{V}^{k}(M)$ the space of contravariant
antisymmetric k-tensors on $M.$ The trace operator $D_{\Omega }:\mathcal{V}%
^{k}(M)\rightarrow \mathcal{V}^{k-1}(M)$ is defined on $A\in \mathcal{V}%
^{k}(M)$ through the expression%
\begin{equation}
i_{D_{\Omega }(A)}\Omega =di_{A}\Omega  \label{3.0}
\end{equation}%
In particular, for a vector field $X$ $\in \mathcal{V}^{1}(M)$ we have
\begin{equation}
i_{D_{\Omega }(X)}\Omega =L_{X}\Omega =D_{\Omega }(X)\Omega  \label{3.0.1}
\end{equation}%
and the smooth function $D_{\Omega }(X)$ is called the divergence of $X$
with respect the volume form $\Omega $ and denoted by $\operatorname{div}_{\Omega
}X. $ The trace operator $D_{\Omega }$ is a cohomology operator, ie. $%
D_{\Omega }\circ D_{\Omega }=0$ and on tensor fields $A\in \mathcal{V}^{r}(M)
$ and $B\in \mathcal{V}^{k}(M)$ it has the property
\begin{equation}
D_{\Omega }(A\wedge B)=(-1)^{k}D_{\Omega }(A)\wedge B+A\wedge D_{\Omega
}B+(-1)^{k+r+1}[A,B]  \label{3.0.2}
\end{equation}%
Here, $[,]$ denotes the Schouten bracket operation defined on the graduated
algebra of contravariant antisymmetric tensor fields on $M.$ For vector
fields $X,Y\in \mathcal{V}^{1}(M)$ the above formula reads%
\begin{equation}
D_{\Omega }(X\wedge Y)=-\operatorname{div}_{\Omega }(X)\wedge Y+X\wedge \operatorname{div}%
_{\Omega }Y-[X,Y]  \label{3.0,3}
\end{equation}%
and for $A\in \mathcal{V}^{r}(M)$ and $B\in \mathcal{V}^{k}(M)$%
\begin{equation}
D_{\Omega }[A,B]=[D_{\Omega }A,B]+[A,D_{\Omega }B]  \label{3.0.4}
\end{equation}

For more complete information on the Schouten bracket and the trace operator
see \cite{DF,K,Vs}.

A Poisson structure on a $m-$manifold $M$ is given by a contravariant
antysimmetric 2-tensor $\Pi $ satisfying the Jacobi Identity $[\Pi ,\Pi ]=0.$
A manifold $M$ equipped with a Poisson structure $\Pi $ is called a Poisson
manifold.

We denote by $\Pi ^{\#}$ the morphism
\begin{eqnarray}
\Pi ^{\#} &:&T^{\ast }M\rightarrow TM  \label{2.1} \\
\beta (\Pi ^{\#}(\eta )) &=&\Pi (\eta ,\beta ),\text{ }\forall \eta ,\beta
\in T^{\ast }(M)  \label{2.1.1}
\end{eqnarray}%
At each point $p\in M,$ the $rank$ of $\Pi $ at $p$ is the dimension of the
linear space $\Pi ^{\#}(T_{p}^{\ast }M).$ The Poisson tensor is called
regular if its rank is constant on $M.$ For each smooth function $f$ on $M,$
the vector field $X_{f}=\Pi ^{\#}(df)$ is called a Hamiltonian vector field
and function $f$ its Hamiltonian function.

For constant rank Poisson tensors $\Pi ,$ the set of Hamiltonian vector
fields generates an integrable distribution in the sense of Frobenious and
consequently a regular foliation $\mathcal{F}$ whose leaves are even
dimensional submanifolds. This foliation is called the Characteristic
foliation of $\Pi $ and each of its leaves $L$ carries a symplectic
structure with symplectic form $\omega _{L}$ defined on the restriction of
Hamiltonian vector fields to $L$ by%
\begin{equation*}
\omega _{L}(X_{f},X_{g})=\Pi (df,dg)
\end{equation*}

If the Characteristic foliation $\mathcal{F}$ is defined by $k=m-2r$
independent 1-forms $\alpha _{i},$ $i=1,...,k$ satisfying the integrability
conditions (\ref{1.1}), we have $\Pi ^{\#}(\alpha _{i})=0$ for $i=1,...,k$
and $\alpha _{i}(\Pi ^{\#}(\beta ))=0$ for all $\beta \in \Lambda ^{1}(M).$
The dimension of leaves is $2r=m-k.$

On an orientable $m-$manifold $M$ with a volume form $\Omega ,$ any 2-tensor
$\Pi $ is defined by a unique differential form $\sigma \in \Lambda
^{m-2}(M) $ through the formula%
\begin{equation}
i_{\Pi }\Omega =\sigma  \label{2.1.2}
\end{equation}%
From (\ref{3.0.2}) and for $f\in C^{\infty }(M)$ and $X\in \mathcal{%
V}^{1}(M),$ we have
\begin{eqnarray}
i_{X_{f}}\Omega &=&-df\wedge \sigma  \label{2.1.3} \\
L_{X}\sigma &=&(\operatorname{div}_{\Omega }X)\sigma +i_{[\Pi ,X]}\Omega
\label{2.1.4}
\end{eqnarray}%
and the Jacobi Identity for $\Pi $ takes the form
\begin{equation}
L_{X_{f}}\sigma =\operatorname{div}_{\Omega }(X_{f})\text{ }\sigma ,\text{ }\forall
f\in C^{\infty }(M).  \label{2.1.5}
\end{equation}

The infinitesimal automorphisms of the Poisson tensor $\Pi $ or Poisson
vector fields, are those vector fields $X$ on $M$ satisfying $[\Pi ,X]=0.$
In terms of the form $\sigma $, this last conditions reads
\begin{equation}
L_{X}\sigma =(\operatorname{div}_{\Omega }X)\sigma  \label{2.1.6}
\end{equation}

In particular, the vector field $Z_{\Omega }=D_{\Omega }(\Pi )$ is a Poisson
vector field with $\operatorname{div}_{\Omega }(Z_{\Omega })=0.$ The vector field $%
Z_{\Omega }$ is called the modular vector field associated to the volume
form $\Omega $ and it has been introduced by A. Weinstein in \cite{W}. The
modular vector fields controls the divergence of Hamiltonian vector field $%
X_{f}$
\begin{equation}
\operatorname{div}_{\Omega }X_{f}=L_{Z_{\Omega }}f.  \label{2.5.3}
\end{equation}%
In fact, if $\tilde{\Omega}=h\Omega $ with $h\neq 0$ is another volume form,
we have for the corresponding modular vector field, the relation%
\begin{equation}
Z_{\tilde{\Omega}}=Z_{\Omega }-X_{\ln (h)}  \label{2.5.2}
\end{equation}%
and one notice that the modular vector fields $Z_{\Omega }$ associated to
the volume forms $\Omega $, belong to the same class of the Poisson
1-cohomology, \cite{DF}. This class is called the unimodular class of $\Pi $. If for
some volume form $\Omega $, the vector field $Z_{\Omega }$ is a Hamiltonian
vector field $Z_{\Omega }=X_{g}$ for some smooth function $g$, from (\ref%
{2.5.2}) we have that $Z_{\tilde{\Omega}}$ vanishes for the volume form $\tilde{\Omega}%
=e^{g}\Omega $ and all Hamiltonian vector fields preserve the volume form $%
\tilde{\Omega}$. When the modular vector field is a Hamiltonian vector field
the Poisson structure is called an unimodular Poisson structure \cite{W}.
For more complete information on Poisson structures, see \cite{DF,LPV,Vs}.

If the Characterist foliation of the Poisson tensor $\Pi $ is the regular
foliation $\mathcal{F}$ defined by k-independent 1-forms $\alpha
_{i},i=1,...,k,$ then the (m-2)- form $\sigma $ in (\ref{2.1.2}) takes the
expression%
\begin{equation*}
\sigma =\mu \wedge \theta \text{ }
\end{equation*}%
with $\mu $ as in (\ref{1.2}) and $\theta $ some ($m-k-2)-$form.

Let us consider now, the 1-form $\delta $ given in (\ref{1.2.7}) and
belonging to the first obstruction class of $\mathcal{F}$. From (\ref%
{1.2.8.1}), the 2-form $d\delta $ vanishes when valued on vector fields
tangent to the leaves of $\mathcal{F}$ and by a well known result \cite{Vs}, the vector field $\Pi ^{\#}(\delta )$ is a Poisson vector field tangent
to the leaves of $\mathcal{F}$. If one takes another set of 1-forms
generating $\mathcal{F}$ and consider the corresponding 1-form $\tilde{\delta%
}$, from (\ref{1.2.6}), we obtain%
\begin{equation}
\Pi ^{\#}(\delta )-\Pi ^{\#}(\tilde{\delta})=\Pi (d(\ln \det F))
\label{2.5.4}
\end{equation}%
and the Poisson vector field $\Pi ^{\#}(\delta )$ remains in the same class
of the Poisson 1-cohomology of $\Pi $.

In general, if $M$ is an oriented $m-$manifold with $\Omega $ a volume form
and $\mathcal{F}$ a regular foliation generated by k-independent 1-forms $%
\alpha _{i},$ $i=1,...,k,$ for any 2-tensor $\Pi $ $($not necessarly a
Poisson tensor) we have%
\begin{equation*}
i_{\Pi }\Omega =\mu \wedge \theta
\end{equation*}%
for some $\theta \in \Lambda ^{m-k-2}(M).$ Moreover, for the vector fields $%
\Pi ^{\#}(\delta )$ and $D_{\Omega }(\Pi )$ we have
\begin{eqnarray}
i_{\Pi ^{\#}(\delta )}\Omega &=&-\delta \wedge \mu \wedge \theta =d\mu
\wedge \theta =  \label{2.5.5} \\
&=&d(\mu \wedge \theta )-(-1)^{k}\mu \wedge d\theta =  \notag \\
&=&i_{D_{\Omega }(\Pi )}\Omega -(-1)^{k}\mu \wedge d\theta  \notag
\end{eqnarray}

Then, the closed foliated (m-1)-forms $i_{\Pi ^{\#}(\delta )}\Omega $ and $%
i_{D_{\Omega }(\Pi )}\Omega $ differ by an exact foliated form and
consequently they belong to the same foliated cohomology class. $\ \ $

\section{Compatible 2-forms with regular foliations on orientable
manifolds.}

Let be $\mathcal{F}$ a regular foliation on an orientabe manifold $M,$
generated by $k$ global independent 1-forms $\alpha _{i},$ $i=1,...,k$ with $%
k=m-2r$ satisfying the integrability condition\ (\ref{1.1}). Let $\omega$ be a
2-form such that $\Omega _{\omega }=\mu \wedge\omega ^{r}$ is a volume form on $M.$ Denote by $\Pi $ the 2-tensor defined
by the relation
\begin{equation}
\mathrm{i}_{\Pi }\Omega _{\omega }=r\mu \wedge \omega ^{r-1}  \label{4.1}
\end{equation}
For any $\beta \in \Lambda ^{1}(M),$ we have $i_{\Pi ^{\#}(\beta )}\mu =0.$ That is, $\Pi ^{\#}(\beta)$ is a vector field tangent to the foliation $\mathcal{F}$. In particular, for each smooth function $f$, $X_{f}=\Pi ^{\#}(\di f)$ satisfies the expression
\begin{equation}
\mathrm{i}_{X_{f}}\Omega _{\omega }=-r\di f\wedge \mu \wedge
\omega ^{r-1}.\label{ecsigma}
\end{equation}

\begin{lemma}\label{lemcompa}
If $\omega $ is a 2-form such that $\Omega _{\omega }=\mu \wedge \omega ^{r}$
is a volume form and $\Pi $ is the 2-tensor such that $\mathrm{i}_{\Pi }\Omega _{\omega
}=r\mu \wedge \omega ^{r-1},$ then for each smooth function $f,$ we have,
\begin{equation}
(\mathrm{i}_{X_{f}}\omega +\di f)\wedge \mu =0  \label{3.3.1}
\end{equation}
and
\begin{equation}
\omega (X_{f},X_{g})=\Pi (\di f,\di g),\text{ }\forall f,g\in \mathcal{C}^{\infty
}(M)  \label{3.3.2}
\end{equation}
\end{lemma}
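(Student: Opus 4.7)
My plan is to establish (\ref{3.3.1}) first and then derive (\ref{3.3.2}) by evaluating the resulting identity on $X_g$.

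For (\ref{3.3.1}), I would expand the left-hand side of (\ref{ecsigma}) using the Leibniz rule for the interior product applied to $\Omega_\omega = \mu\wedge\omega^r$. As noted in the paragraph above the lemma, $\Pi^\#(\beta)$ is tangent to $\mathcal{F}$ for every 1-form $\beta$, so in particular $\mathrm{i}_{X_f}\mu = 0$ and the expansion collapses to $(-1)^k r\,\mu\wedge \mathrm{i}_{X_f}\omega\wedge\omega^{r-1}$. Matching this with the right-hand side of (\ref{ecsigma}) and commuting $\di f$ past $\mu$ produces the intermediate identity
\begin{equation*}
\mu\wedge\bigl(\mathrm{i}_{X_f}\omega + \di f\bigr)\wedge\omega^{r-1}=0.
\end{equation*}

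The hard step will be upgrading this to the sharper conclusion $(\mathrm{i}_{X_f}\omega + \di f)\wedge\mu = 0$. I plan to argue pointwise. Choose a local basis $\alpha_1,\dots,\alpha_k,\beta_1,\dots,\beta_{2r}$ of 1-forms as in the proof of Lemma \ref{lemma1}, so that wedging with $\mu$ annihilates any factor containing some $\alpha_i$. Decomposing each form into its ``tangential'' part (the component lying in $\Lambda^\bullet\mathrm{span}(\beta_1,\dots,\beta_{2r})$) and the rest, the intermediate identity reduces to $\gamma_T\wedge\omega_T^{r-1}\wedge\mu=0$, where $\gamma_T$ and $\omega_T$ denote the tangential parts of $\mathrm{i}_{X_f}\omega+\di f$ and of $\omega$. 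The hypothesis that $\mu\wedge\omega^r$ is a volume form is precisely equivalent to $\omega_T^r\neq 0$, i.e.\ $\omega_T$ is a symplectic form on the $2r$-dimensional leaf-direction subspace. The classical fact that $\eta\mapsto\eta\wedge\omega_T^{r-1}$ is injective on 1-forms over a symplectic vector space then forces $\gamma_T=0$, which by Lemma \ref{lemma1} is exactly (\ref{3.3.1}).

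Finally, (\ref{3.3.2}) drops out: since $X_g=\Pi^\#(\di g)$ is tangent to $\mathcal{F}$, evaluating (\ref{3.3.1}) on $X_g$ gives $\omega(X_f,X_g)+X_g f=0$, and the defining relation (\ref{2.1.1}) together with the antisymmetry of $\Pi$ rewrites $X_g f=\di f(\Pi^\#(\di g))=\Pi(\di g,\di f)=-\Pi(\di f,\di g)$, yielding (\ref{3.3.2}).
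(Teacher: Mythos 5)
Your proof is correct, and while it follows the paper's skeleton --- first derive the weaker identity $(\mathrm{i}_{X_{f}}\omega +\di f)\wedge \mu \wedge \omega ^{r-1}=0$ from (\ref{ecsigma}) using $\mathrm{i}_{X_{f}}\mu =0$, then upgrade it, then contract with $X_{g}$ --- you execute the crucial upgrading step by a genuinely different mechanism. The paper argues by contradiction: writing $\gamma =\mathrm{i}_{X_{f}}\omega +\di f$ and assuming $\gamma \wedge \mu \neq 0$, it completes $\gamma ,\alpha _{1},\dots ,\alpha _{k}$ to a local coframe, splits $\omega =\gamma \wedge \rho +\chi $ with $\chi $ free of the factor $\gamma $, deduces $\chi ^{r-1}\wedge \mu =0$ and hence $\omega ^{r}\wedge \mu =0$, contradicting that $\Omega _{\omega }$ is a volume form. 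You instead argue directly and pointwise: you project onto the leafwise subalgebra $\Lambda ^{\bullet }\mathrm{span}(\beta _{1},\dots ,\beta _{2r})$, observe that the volume-form hypothesis says exactly that the tangential part $\omega _{T}$ is a linear symplectic form on the $2r$-dimensional leaf directions, and invoke the Lefschetz-type fact that $\eta \mapsto \eta \wedge \omega _{T}^{r-1}$ is injective (indeed an isomorphism $\Lambda ^{1}\rightarrow \Lambda ^{2r-1}$) over a symplectic vector space to force $\gamma _{T}=0$, which by Lemma \ref{lemma1} is precisely (\ref{3.3.1}). The two arguments are equivalent in substance --- the paper's basis manipulation is in effect a hands-on proof of that injectivity --- but your route isolates the real reason the step works (leafwise nondegeneracy of $\omega $), proceeds directly rather than by contradiction, and replaces an ad hoc computation with a citable piece of symplectic linear algebra, whereas the paper's version has the virtue of being self-contained; your reduction also handles the degenerate case $r=1$ transparently, where the injectivity is that of the identity map. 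Your derivation of (\ref{3.3.2}) by contracting (\ref{3.3.1}) with $X_{g}$, using $\mathrm{i}_{X_{g}}\mu =0$ and the sign convention (\ref{2.1.1}) together with antisymmetry of $\Pi $, matches the paper's computation exactly.
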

\begin{proof}
From (\ref{ecsigma}), we have
\begin{eqnarray*}
\mathrm{i}_{X_f}\Omega _{\omega }&=& r \mu \wedge \mathrm{i}_{X_f}\omega \wedge \omega^{r-1}.
\end{eqnarray*}
Therefore, we have $(\mathrm{i}_{X_{f}}\omega
+df)\wedge \mu \wedge \omega ^{r-1}=0.$ Suppose that $(\mathrm{i}_{X_{f}}\omega
+df)\wedge \mu \neq 0.$ Then, $(\mathrm{i}_{X_{f}}\omega +df),$ $\alpha _{i},$  are $k+1$ independent 1-forms and we can complete to a local basis $\mathcal{B}$ of 1-forms. Locally, $\omega =(\mathrm{i}_{X_{f}}\omega +df)\wedge \rho +\chi $ where $\chi \in
\Lambda ^{2}(M)$ and $\chi $ does not contain\ $(\mathrm{i}_{X_{f}}\omega +df)$ in
its decomposition. Then $\omega ^{r-1}=\chi ^{r-1}+(r-1)\chi \wedge
(\mathrm{i}_{X_{f}}\omega +df)\wedge \rho $. Taking the wedge product with $\mu
\wedge (\mathrm{i}_{X_{f}}\omega +df)$, we obtain $0=\chi ^{r-1}\wedge \mu \wedge
(\mathrm{i}_{X_{f}}\omega +df)$. From the property of $\chi $, we get $0=\chi
^{r-1}\wedge \mu .$ Finally, $\omega ^{r}=\chi ^{r}+r\chi \wedge
(\mathrm{i}_{X_{f}}\omega +df)\wedge \rho $ which implies that $\omega ^{r}\wedge \mu =0.$ But this is a contradiction because of $\Omega _{\omega }=\omega ^{r}\wedge
\mu $ is a volume form. Then $(\mathrm{i}_{X_{f}}\omega +df)\wedge \mu =0.$
For the second part of Lemma, we only compute
\begin{eqnarray*}
0=\mathrm{i}_{X_g}(\mathrm{i}_{X_{f}}\omega +\di f)\wedge \mu = (\omega(X_f,X_g)+\Pi(\di g , \di f))\mu.
\end{eqnarray*}
Since $\mu \neq 0,$ we have $\omega(X_f,X_g)+\Pi(\di g , \di f)=0. $
\end{proof}

Using the previous geometrical objects associated to the regular foliation $\mathcal{F}$, we give the following definition.

\begin{definition}
A 2-form $\omega \in \Lambda ^{2}(M)$ is compatible with the regular
foliation $\mathcal{F}$ if: a) $\di\omega \wedge \mu =0,$ and b) $\Omega
_{\omega }=\mu \wedge \omega ^{r}$ is a volume form on $M$.
\end{definition}

Notice that condition a) is equivalent to state that $\omega $ is a closed
2-form when valued on tangent vector fields to $\mathcal{F}$ and b) implies $%
\omega $ is nondegenerate on the leaves of $\mathcal{F}$. If $%
\omega $ is a compatible 2-form with $\mathcal{F}$ and we have dual vector
fields $X^{r}$ with $\alpha _{j}(X^{r})=\delta _{rj}$ for $i,j=1,...,k.$,
then, the 2-form
\begin{equation*}
\tilde{\omega}=\omega +\sum_{i=1}^{k}(\mathrm{i}_{X^{i}}\omega )\wedge \alpha _{i}+%
\frac{1}{2}\sum_{i,j=1}^{k}(\mathrm{i}_{X^{i}\wedge X^{j}}\omega )\alpha _{i}\wedge
\alpha _{j}
\end{equation*}
is a new compatible 2-form satisfying
\begin{equation*}
\mathrm{i}_{X^{r}}\tilde{\omega}=0,
\end{equation*}%
for $r=1,...,k.$

The existence of a compatible 2-form $\omega $ with a regular foliation $%
\mathcal{F}$ implies the existence of a Poisson tensor having $\mathcal{F}$
as its characteristic foliation as we show in the following result.

\begin{theorem}
Let be $M$ an orientable manifold and $\mathcal{F}$ a regular foliation
generated by k independent $1-$forms $\alpha _{1},\alpha _{2},...,\alpha
_{k}.$ Then, there exists a Poisson tensor $\Pi $ having $\mathcal{F}$ as
its characteristic foliation if and only if there exists a 2-form $\omega $
compatible with $\mathcal{F}$. Moreover, $\Pi $ is a unimodular Poisson
tensor if and only if the first obstruction class $C_{\mathcal{F}}$ vanishes.
\end{theorem}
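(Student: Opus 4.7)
The plan is to split the theorem into the equivalence ``compatible 2-form $\Leftrightarrow$ Poisson tensor with characteristic foliation $\mathcal{F}$'' and then the unimodularity criterion.

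For the sufficiency part of the first equivalence, given a compatible $\omega$ I would take $\Omega_\omega = \mu\wedge\omega^r$ (which is a volume form by hypothesis) and define $\Pi$ by (\ref{4.1}). Lemma \ref{lemcompa} already tells us $\operatorname{Im}\Pi^{\#}\subset T\mathcal{F}$; the same proof, with $\beta$ in place of $\di f$, also gives $(\mathrm{i}_{\Pi^{\#}(\beta)}\omega+\beta)\wedge\mu = 0$ for every 1-form $\beta$. Applying this to $\beta = \alpha_i$ and invoking the nondegeneracy of $\omega$ on each leaf (a direct consequence of $\mu\wedge\omega^r$ being a volume form) forces $\Pi^{\#}(\alpha_i)=0$, so $\ker\Pi^{\#}=\operatorname{span}(\alpha_1,\dots,\alpha_k)$ and the characteristic distribution of $\Pi$ is exactly $T\mathcal{F}$. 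For $[\Pi,\Pi]=0$ I would use (\ref{2.1.5}), which reduces the problem to proving $L_{X_f}\sigma = \operatorname{div}_{\Omega_\omega}(X_f)\,\sigma$ with $\sigma = r\mu\wedge\omega^{r-1}$. The crucial step is the identity $\mu\wedge L_{X_f}\omega = 0$: the term $\mathrm{i}_{X_f}\di\omega\wedge\mu$ vanishes because $\di\omega\wedge\mu = 0$ and $\alpha_j(X_f)=0$, while $\di(\mathrm{i}_{X_f}\omega)\wedge\mu = 0$ follows from Lemma \ref{lemderiv} applied to $(\mathrm{i}_{X_f}\omega + \di f)\wedge\mu = 0$. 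Combining with $L_{X_f}\mu = -\delta(X_f)\mu$ (from $\di\mu = -\delta\wedge\mu$), expansion of $L_{X_f}\Omega_\omega$ collapses to $-\delta(X_f)\Omega_\omega$, so $\operatorname{div}_{\Omega_\omega}(X_f) = -\delta(X_f)$, and the same expansion on $\sigma$ matches $\operatorname{div}_{\Omega_\omega}(X_f)\,\sigma$.

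For the necessity direction I would start from a Poisson tensor $\Pi$ with characteristic foliation $\mathcal{F}$, so that each leaf $L$ carries a symplectic form $\omega_L$ inverse to $\Pi|_L$. Picking a Riemannian metric to split $TM = T\mathcal{F}\oplus N\mathcal{F}$ with dual vector fields $X^j$ to the $\alpha_j$, I would define a global 2-form $\omega$ by $\mathrm{i}_{X^j}\omega = 0$ together with $\omega(Y_1,Y_2) = \omega_L(Y_1,Y_2)$ on leaf-tangent vectors. This $\omega$ is smooth, $\mu\wedge\omega^r$ is a volume form by leafwise nondegeneracy, and $\di\omega\wedge\mu = 0$ since $\di\omega$ agrees with $\di(\omega|_L)=0$ on leaf-tangent vectors.

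For the unimodular part I would work with $\Omega_\omega$. A direct calculation of $\mathrm{i}_{Z_{\Omega_\omega}}\Omega_\omega = \di\sigma$ yields
\begin{equation*}
\mathrm{i}_{Z_{\Omega_\omega}}\Omega_\omega = -r\,\delta\wedge\mu\wedge\omega^{r-1},
\end{equation*}
using $\di\mu = -\delta\wedge\mu$ together with $\mu\wedge\di\omega = 0$. Extending (\ref{2.1.3}) by linearity gives $\mathrm{i}_{\Pi^{\#}(\delta)}\Omega_\omega = -\delta\wedge\sigma = -r\,\delta\wedge\mu\wedge\omega^{r-1}$ as well, hence $Z_{\Omega_\omega} = \Pi^{\#}(\delta)$. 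By (\ref{2.5.2}), $\Pi$ is unimodular iff $Z_{\Omega_\omega} = X_{\ln h}$ for some positive smooth $h$, i.e.\ iff $\Pi^{\#}(\delta - \di\ln h)=0$. Since $\ker\Pi^{\#} = \operatorname{span}(\alpha_1,\dots,\alpha_k)$, this is equivalent to $(\delta - \di\ln h)\wedge\mu = 0$, that is, $\pi(\delta) = \di_\mathcal{F}\pi(\ln h)$, which says precisely that $C_\mathcal{F} = [\pi(\delta)]_\mathcal{F}$ vanishes.

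The main obstacle is the Jacobi identity verification in the first paragraph: this is where the differential part of the compatibility condition ($\di\omega\wedge\mu=0$) and the volume condition must interact through the trace-operator formula (\ref{2.1.5}). Once $\mu\wedge L_{X_f}\omega = 0$ is in hand, everything else is bookkeeping within the formalism already set up in Sections 2 and 3.
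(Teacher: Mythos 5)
Your proposal is correct, and on the forward implication it coincides with the paper's proof essentially line by line: define $\Pi$ by (\ref{4.1}), reduce the Jacobi identity via (\ref{2.1.5}) to $L_{X_f}(\mu\wedge\omega^{r-1})=\operatorname{div}_{\Omega_\omega}(X_f)\,\mu\wedge\omega^{r-1}$, and kill the two terms of $\mu\wedge L_{X_f}\omega$ exactly as the paper does (your appeal to Lemma \ref{lemderiv} for $\di(\mathrm{i}_{X_f}\omega)\wedge\mu=0$ is the same computation the paper performs by differentiating (\ref{3.3.1})). Where you genuinely go beyond the printed proof: first, the paper never actually writes out the direction ``Poisson tensor with characteristic foliation $\mathcal{F}$ implies existence of a compatible $2$-form''; your construction (leafwise symplectic forms extended by zero on a metric-transversal distribution, with $\di\omega\wedge\mu=0$ obtained from leafwise closedness via Lemma \ref{lemma1}) is the standard argument and fills this omission. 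Second, you verify $\ker\Pi^{\#}=\operatorname{span}(\alpha_1,\dots,\alpha_k)$, so the characteristic foliation is exactly $\mathcal{F}$ and not merely contained in it --- a point the paper glosses over after noting $\mathrm{i}_{\Pi^{\#}(\beta)}\mu=0$. Third, for unimodularity you first establish $Z_{\Omega_\omega}=\Pi^{\#}(\delta)$, which is precisely the Section 3 computation (\ref{2.5.5}) specialized to a compatible $\omega$ (where $\mu\wedge\di\theta$ vanishes because $\di\omega\wedge\mu=0$), and then translate ``$Z_{\Omega_\omega}$ is Hamiltonian'' into $(\delta-\di g)\wedge\mu=0$ using your kernel characterization; the paper instead argues through $\operatorname{div}_{\Omega_\omega}(X_f)=-\mathrm{i}_{X_f}\delta$ together with (\ref{2.5.3}) in one direction, and through the implication $(\di h-\delta)\wedge\mu\wedge\omega^{r-1}=0\Rightarrow(\di h-\delta)\wedge\mu=0$ (rerunning the nondegeneracy argument of Lemma \ref{lemcompa}) in the other. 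The two unimodularity arguments are computationally equivalent, but yours is more structural and avoids the second nondegeneracy argument. The only point you leave implicit --- and the paper's converse tacitly assumes the same thing --- is that for a given Poisson tensor $\Pi$ the constructed $\omega$ satisfies (\ref{4.1}), i.e.\ $\mathrm{i}_{\Pi}\Omega_\omega=r\mu\wedge\omega^{r-1}$, which is needed so that $\sigma=r\mu\wedge\omega^{r-1}$ in your modular-field computation; this is pointwise linear algebra for a leafwise-inverse pair $(\Pi,\omega)$ and worth one sentence in a final write-up.
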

\begin{proof}
First, we assume that the 2-form $\omega $ is compatible with $\mathcal{F}$. We consider the 2-tensor $\Pi $ defined by the
relation $\mathrm{i}_{\Pi }\Omega _{\omega }=r\mu \wedge \omega ^{r-1}.$ We only need to prove that $\Pi$ satisfies the
the Jacobi Identity. It is sufficient to show that given a smooth function $f$, the vector field $X_f= \Pi^\#(\di f)$
satisfies $L_{X_{f}}(\mu \wedge \omega ^{r-1})=\operatorname{div}_{\Omega _{\omega
}}(X_{f})\mu \wedge \omega ^{r-1}$.

Based on the properties (\ref{1.2.8}) and (%
\ref{3.3.1}) of $\mu $ and $\omega ,$ we have
\begin{equation*}
L_{X_{f}}(r\mu \wedge \omega ^{r-1})=rL_{X_{f}}\mu \wedge \omega
^{r-1}+r(r-1)\mu \wedge L_{X_{f}}\omega \wedge \omega ^{r-2}
\end{equation*}%
On other hand, we get%
\begin{equation*}
L_{X_{f}}\mu =\mathrm{i}_{X_{f}}d\mu =-(\mathrm{i}_{X_{f}}\delta )\mu
\end{equation*}%
and%
\begin{equation*}
\mu \wedge L_{X_{f}}\omega =\mu \wedge (di_{X_{f}}\omega +\mathrm{i}_{X_{f}}d\omega ).
\end{equation*}
Computing the exterior derivative of (\ref{3.3.1}), we have
\begin{equation*}
\mu \wedge (di_{X_{f}}\omega )=(-1)^{k+1}\delta \wedge \mu \wedge
(df+\mathrm{i}_{X_{f}}\omega )=0.
\end{equation*}%
Since $\mathrm{i}_{X_f}(\mu \wedge \di \omega)=0,$ we obtain
\begin{equation*}
\mu \wedge \mathrm{i}_{X_{f}}d\omega )=(-1)^{k}\mathrm{i}_{X_{f}}(\mu \wedge d\omega )=0.
\end{equation*}%
Combining the equations above
\begin{equation*}
L_{X_{f}}(\mu \wedge \omega ^{r-1})=-(\mathrm{i}_{X_{f}}\delta )\mu \wedge \omega
^{r-1}.
\end{equation*}%
Finally,
\begin{eqnarray*}
\operatorname{div}_{\Omega _{\omega }}(X_{f})\mu \wedge \omega ^{r} &=&L_{X_{f}}(\mu
\wedge \omega ^{r})=L_{X_{f}}(\mu \wedge \omega ^{r-1})\wedge \omega +\mu
\wedge \omega ^{r-1}\wedge L_{X_{f}}\omega = \\
&=&(-\mathrm{i}_{X_{f}}\delta )\mu \wedge \omega ^{r}.
\end{eqnarray*}%
Therefore,
\begin{equation*}
rL_{X_{f}}(\mu \wedge \omega ^{r-1})=\operatorname{div}_{\Omega _{\omega
}}(X_{f})r\mu \wedge \omega ^{r-1}.
\end{equation*}%
 If $C_{\mathcal{F}}$ vanishes, $\delta
\wedge \mu =dh\wedge \mu $ for some smooth function $h$ and $\mathrm{i}_{X_{f}}\delta
=L_{X_{f}}h.$ So, $L_{D_{\Omega _{\omega }}(\Pi )}f=L_{X_{h}}f$ $\ \forall
f\in C^{\infty }(M)$ and $D_{\Omega _{\omega }}(\Pi )=X_{h}$ is a
Hamiltonian vector field\ and $\Pi $ a unimodular Poisson tensor. Note that $%
\operatorname{div}_{\Omega _{\omega }}(X_{f})=-\mathrm{i}_{X_{f}}\delta .$ Reciprocally,
suppose that $\Pi $ is a unimodular Poisson tensor. 
Then $D_\Omega(\Pi)= X_h $ for some $h$ and $dh\wedge \mu \wedge \omega^{r-1} = d\mu \wedge \omega^{r-1}=\delta\wedge\mu\wedge\wedge \omega^{r-1} $. Finally, $(dh - \delta)\wedge\mu \wedge \omega^{r-1}=0$ implies $(dh - \delta)\wedge\mu=0$ by the same arguments used in the proof of Lemma \ref{lemcompa}.

\end{proof}

\begin{remark}
Given k independent functions $g_{1},...,g_{k}$ in an orientable manifold $M$
with dim $M=2r+k$; the existence of a Poisson structure having those
functions as Casimir functions has been studied in \cite{DP}. The above
theorem give us sufficient and necessary conditions for the existence of the
desired Poisson bracket.
\end{remark}

\begin{example}
Regular Poisson structures on $\mathbb{R}^{4}.$
\end{example}

Consider on $\mathbb{R}^{4}$ with global coordinates $(\mathbf{x},y),\mathbf{x}=(x_{1},x_{2},x_{3})\in\mathbb{R}^{3},$ $y\in\mathbb{R},$ a regular 2-codimension foliation $\mathcal{F}$ on $\mathbb{R}^{4}$ generated by the independent 1-forms $\alpha =A\mathrm{d}\mathbf{x}+a\,\mathrm{d}y$ and $\beta =B\mathrm{d}\mathbf{x}+b\,\mathrm{d}y$, with
\begin{eqnarray*}
(A\times B)\cdot (\nabla a-\frac{\partial A}{\partial y}) &=&(aB-bA)\cdot rot A \\
(A\times B)\cdot (\nabla b-\frac{\partial B}{\partial y}) &=&(aB-bA)\cdot rot B
\end{eqnarray*}%
Then, the foliation generated by the given forms is integrable and
 $$\omega =\frac{1}{2}\frac{bA-aB}{\Vert bA-aB\Vert ^{2}}d\mathbf{x}%
\wedge d\mathbf{x}+\frac{1}{2}\frac{A\times B}{\Vert A\times B\Vert ^{2}}d%
\mathbf{x}\wedge dy$$
is a compatible 2-form with $\mathcal{F}$ and the
Poisson tensors $\Pi $ having $\mathcal{F}$ as its characteristic foliation
take the form
\begin{equation*}
\Pi =((bA-aB)\frac{\partial }{\partial \mathbf{x}}\wedge \frac{\partial }{%
\partial \mathbf{x}}+(A\times B)\frac{\partial }{\partial \mathbf{x}}\wedge
\frac{\partial }{\partial y}),
\end{equation*}%
Moreover, $\Pi $ is a unimodular Poisson tensor if $div(A\times B)=0$ and $%
\mathbf{rot((}bA-aB)+\frac{\partial }{\partial y}(A\times B)=0.$

\bigskip

\section{Regular transversally constant Poisson structures.}

Following Vaismann \cite{Vs}, a regular Poisson tensor is transversally
constant if its characteristic foliation has a transversal distribution $%
\mathcal{T}$ such that every local vector field $X$ in $\mathcal{T}$ which
preserves the characteristic foliation $\mathcal{F}$ is a Poisson vector
field.

Consider a regular Poisson tensor $\Pi $ with a characteristic foliation
$\mathcal{F}$ generated by $k$ independent 1-forms $\alpha_1,\ldots,\alpha_k$ with local dual vector
 fields $X^j$ and a 2-form $\omega $ compatible  with $\mathcal{F}$ such that $\mathrm{i}_{X^j}\omega =0$. 
 
Under the considerations above, we have the following criteria for regular transversally constant Poisson structures.


\begin{theorem}
If dual vector fields $X^{j},$ $j=1,...k$ satisfy the following conditions:
a) $i_{X_{f}}L_{X^{j}}\mu =0,$ $\forall f\in C^{\infty }(M),$ and $\
b)i_{X^{j}}\mu \wedge d\omega =0;$ then each vector field $X^{j}$ is a
Poisson vector field and the Poisson structure is transversally constant.
\end{theorem}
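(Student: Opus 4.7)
My plan is to extract two geometric consequences from the hypotheses, and then combine them via a leafwise identity for $\omega$ to deduce that each $X^j$ is Poisson; the transversally constant property then follows by a Casimir argument.

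First, from condition (a), applying Cartan's formula to the identity $i_{X_f}\mu=0$ yields
\[
0=i_{X_f}L_{X^j}\mu=L_{X^j}(i_{X_f}\mu)-i_{[X^j,X_f]}\mu=-i_{[X^j,X_f]}\mu,
\]
so $\alpha_i([X^j,X_f])=0$ for every $i$, hence $[X^j,X_f]$ is tangent to $\mathcal{F}$. Because the Hamiltonian vector fields locally span $T\mathcal{F}$, a Leibniz expansion $[X^j,\sum g_\ell X_{f_\ell}]=\sum(X^j g_\ell)X_{f_\ell}+g_\ell[X^j,X_{f_\ell}]$ shows that $[X^j,Y]$ is tangent for every tangent $Y$, so $X^j$ preserves $\mathcal{F}$. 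Second, since $i_{X^j}\omega=0$, we have $L_{X^j}\omega=i_{X^j}d\omega$; contracting the identity $\mu\wedge d\omega=0$ (from the compatibility $d\omega\wedge\mu=0$) with $X^j$ gives $i_{X^j}\mu\wedge d\omega+(-1)^k\mu\wedge i_{X^j}d\omega=0$, and condition (b) then forces $\mu\wedge L_{X^j}\omega=0$. By Lemma \ref{lemma1}, $L_{X^j}\omega$ vanishes on any pair of vectors tangent to $\mathcal{F}$.

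To prove $X^j$ is Poisson I will show $[X^j,X_f]=X_{X^jf}$ for every $f$. By Lemma \ref{lemcompa}, $(i_{X_g}\omega+dg)\wedge\mu=0$ gives $\omega(X_g,V)=-V(g)$ for every tangent $V$ (by Lemma \ref{lemma1}, the 1-form $i_{X_g}\omega+dg$ vanishes on $V$). Applying $L_{X^j}$ to the function $\omega(X_f,V)=-Vf$ for a tangent local field $V$ gives
\[
-X^j(Vf)=(L_{X^j}\omega)(X_f,V)+\omega([X^j,X_f],V)+\omega(X_f,[X^j,V]).
\]
The first term vanishes as both $X_f,V$ are tangent; the third equals $-[X^j,V]f$ since $[X^j,V]$ is tangent. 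Expanding $X^j(Vf)=[X^j,V]f+V(X^jf)$, the identity collapses to $\omega([X^j,X_f],V)=-V(X^jf)=\omega(X_{X^jf},V)$, and leafwise nondegeneracy of $\omega$ (from $\mu\wedge\omega^r$ being a volume form) gives $[X^j,X_f]=X_{X^jf}$. Hence $X^j\{f,g\}=X^j(X_fg)=[X^j,X_f]g+X_f(X^jg)=\{X^jf,g\}+\{f,X^jg\}$, so $X^j$ is a Poisson vector field.

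For the transversally constant property, take $\mathcal{T}=\operatorname{span}(X^1,\ldots,X^k)$. A local section $X=\sum h_j X^j$ preserves $\mathcal{F}$ iff the transversal part of $[X,Y]=\sum h_j[X^j,Y]-\sum(Yh_j)X^j$ vanishes, i.e.,\ $Y(h_j)=0$ for every tangent $Y$ and every $j$; equivalently each $h_j$ is a Casimir, so $X_{h_j}=0$. The Schouten Leibniz rule then gives $[X,\Pi]=\sum h_j[X^j,\Pi]=0$, showing $X$ is Poisson. The main technical point is recognising that hypotheses (a) and (b), together with the compatibility of $\omega$, are precisely what is needed to make $L_{X^j}\omega$ purely transverse and $X^j$ foliation-preserving; once this is established, the identity $[X^j,X_f]=X_{X^jf}$ follows from a single application of leafwise nondegeneracy.
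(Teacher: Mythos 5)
Your proof is correct, and its first two stages coincide exactly with the paper's: like the authors, you read condition (a), via $i_{X_f}L_{X^j}\mu = L_{X^j}i_{X_f}\mu - i_{[X^j,X_f]}\mu = -i_{[X^j,X_f]}\mu$, as saying that $[X^j,X_f]$ is tangent to $\mathcal{F}$, and you read condition (b), together with the standing hypotheses $i_{X^j}\omega =0$ and $\mu\wedge \mathrm{d}\omega=0$, as saying that $\mu\wedge L_{X^j}\omega=0$, i.e.\ that $L_{X^j}\omega$ vanishes on pairs of tangent vectors. Where you genuinely diverge is the endgame. The paper expands $0=i_{X_g}i_{X_f}L_{X^j}\omega$ by Cartan calculus and, using $(i_{X_f}\omega+\mathrm{d}f)\wedge\mu=0$ evaluated on the tangent field $[X^j,X_g]$, lands directly on the derivation identity $L_{X^j}\{g,f\}-\{L_{X^j}g,f\}-\{g,L_{X^j}f\}=0$, with no appeal to nondegeneracy. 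You instead Lie-differentiate the scalar identity $\omega(X_f,V)=-Vf$ (your correct reading of Lemma \ref{lemcompa} through Lemma \ref{lemma1}) and invoke leafwise nondegeneracy of $\omega$ (which you rightly extract from $\mu\wedge\omega^r$ being a volume form, since $\mu$ annihilates tangent vectors) to get the stronger pointwise conclusion $[X^j,X_f]=X_{X^jf}$, from which the derivation property is immediate. Your route costs one extra ingredient but buys a sharper structural statement. You also supply a step the paper leaves implicit: the passage from ``each $X^j$ is Poisson'' to ``$\Pi$ is transversally constant.'' Your argument --- a section $X=\sum_j h_jX^j$ of $\mathcal{T}=\operatorname{span}(X^1,\ldots,X^k)$ preserves $\mathcal{F}$ iff each $h_j$ is leafwise constant, hence a Casimir with $X_{h_j}=0$, so the Schouten--Leibniz rule gives $[X,\Pi]=\sum_j h_j[X^j,\Pi]=0$ --- is exactly what Vaisman's definition requires and is a worthwhile addition; the paper's proof ends at the $X^j$ themselves. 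All of your intermediate computations check out, including the contraction $i_{X^j}(\mu\wedge \mathrm{d}\omega)=i_{X^j}\mu\wedge \mathrm{d}\omega+(-1)^k\mu\wedge i_{X^j}\mathrm{d}\omega$ and the extension, by the Leibniz expansion, of the tangency of $[X^j,X_f]$ to $[X^j,Y]$ for arbitrary tangent $Y$.
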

\begin{proof}
\ The condition $a)$ means that $X^j,$ $j=1,...k$  preserve the foliation $\mathcal{F}$
and for each Hamiltonian vector field $X_{g}$ the vector field $[X^j,X_{g}]$
is a tangent vector field to $\mathcal{F}$. From the property $%
(i_{X_{f}}\omega +df)\wedge \mu =0,$ we obtain when valued on the tangent
vector field $[X^j,X_{g}]$%
\begin{equation*}
i_{[X^j,X_{g}]}i_{X_{f}}\omega +L_{[X^j,X_{g}]}f=0.
\end{equation*}%
By item $b),$ we obtain
\begin{eqnarray*}
0 &=&i_{X_{g}}i_{X_{f}}L_{X^j}\omega
=i_{X_{g}}(L_{X^j}i_{X_{f}}+i_{[X_{f},X^j]})\omega = \\
&=&-L_{[X^j,X_{f}]}g+L_{X^j}i_{X_{g}}i_{X_{f}}\omega
+i_{[X_{g},X^j]}i_{X_{f}}\omega = \\
&=&L_{X^j}\left\{ g,f\right\} -\left\{ L_{X^j}g,f\right\} -\left\{
g,L_{X^j}f\right\}
\end{eqnarray*}%
and $X^j$ is a Poisson vector field.
\end{proof}

\begin{example}
(Dirac brackets) \bigskip Let be $(M,\omega _{0})$ a symplectic $(2r+2k)-$%
manifold and $g_{1},g_{2},...,g_{2k}$ independent smooth functions such that
the matrix function $((\Delta ^{ij}))_{i,j=1,...,2k}=((\left\{
g_{i},g_{j}\right\} _{0}))$ is invertible and $((\Delta
_{ij}))_{i,j=1,...,2k}$ its inverse matrix $\sum\limits_{j=1}^{2k}\Delta
^{ij}\Delta _{jk}=\delta _{ik}.$ Consider the regular foliation $\mathcal{F}$
generated by the k independent functions $g_{i},$ $i=1,...,2k$ and the $%
k-form$ $\mu $ with%
\begin{equation*}
\mu =dg_{1}\wedge dg_{2}\wedge \cdots \wedge dg_{2k}
\end{equation*}%
Consider a 2-form $\omega ^{DIR}\in \Lambda ^{2}(M)$ satisfying $\mu \wedge
\omega ^{DIR}=0$ given by
\begin{equation*}
\omega ^{DIR}=\omega _{0}+\frac{1}{2}\sum\limits_{i,j=1}^{2k}\Delta
_{ij}dg_{i}\wedge dg_{j}
\end{equation*}%
The 2-form $\omega ^{DIR}$ is a foliated closed form. Moreover, taking into
account that $\omega _{0}$ is non-degenerate when restricted to the leaves
of the foliation, we have that $\mu \wedge (\omega ^{DIR})^{r}=\mu \wedge
\omega _{0}^{r}$ is a volume form. Then the 2-tensor $\Pi ^{DIR}$ given by%
\begin{equation*}
i_{\Pi ^{DIR}}\mu \wedge \omega _{0}^{r}=r\mu \wedge \omega _{0}^{r-1}
\end{equation*}%
is a Poisson tensor.~After some calculations we obtain%
\begin{equation*}
\Pi ^{DIR}=\Pi _{0}-\frac{1}{2}\sum\limits_{i,j=1}^{2k}\Delta
_{ij}X_{g_{i}}\wedge X_{g_{j}}
\end{equation*}%
and%
\begin{equation*}
X_{f}^{DIR}=X_{f}-\sum\limits_{i,j=1}^{2k}\Delta _{ij}\left\{
g_{i},f\right\} _{0}X_{g_{j}}
\end{equation*}%
where $\Pi _{0}$ and $X_{g_{i}},$ $i=1,...,2k$ denotes the Poisson bracket
and the Hamiltonian vector fields for $g_{i},$ $i=1,...,2k$ with respect to
the initial symplectic structure. Taking as dual vector fields to the
generators of the foliation $dg_{i},$ $i=1,...,2k$ the vector fields%
\begin{equation*}
Z_{i}=\sum_{j=1}^{2k}\Delta _{ij}X_{g_{j}}
\end{equation*}%
we can check directly for $i=1,...,2k$ that

\begin{eqnarray*}
i_{X_{f}^{DIR}}L_{Z_{i}}\mu  &=&0 \\
\mu \wedge L_{Z_{i}}\omega ^{DIR} &=&0
\end{eqnarray*}%
Then, the $Z_{i},$ $i=1,...,2k$ are Poisson vector fields transversal to the
foliation $\mathcal{F}$ and the Dirac structure is regular transversally
constant.
\end{example}


\noindent \textbf{Acknowledgements.} The authors are grateful to the research group of Geometry and Dynamical Systems of the University of Sonora. Particularly, Thanks to Professor Yu. M. Vorobev, the Ph. D. students Jos\'e Ru\'iz, Isaac Hasse and Eduardo Velasco for fruitful discussions on the preparation of this paper. 
R. Flores-Espinoza was partially supported by the National Council of Science and Technology (CONACyT) under the Grant 178690 and M. Avenda\~{n}o-Camacho  by the
National Council of Science and Technology (CONACyT) under the Grant 219631, CB-2013-01.

\end{document}